\definecolor{webgreen}{rgb}{0,.5,0}
\definecolor{webbrown}{rgb}{.6,0,0}
\newcommand{\seqnum}[1]{\href{http://oeis.org/#1}{\underline{#1}}}
\theoremstyle{plain}
\newtheorem{theorem}{Theorem}
\newtheorem{corollary}[theorem]{Corollary}
\newtheorem{lemma}[theorem]{Lemma}
\theoremstyle{definition}
\newtheorem{example}[theorem]{Example}
\begin{document}

\begin{center}
\vskip 1cm{\LARGE\bf 
A meet-in-the-middle algorithm for finding \\
\vskip .1in
extremal restricted additive 2-bases
}
\vskip 1cm
\large
Jukka Kohonen \\
Department of Mathematics and Statistics\\
P. O. Box 68\\
FI-00014 University of Helsinki\\
Finland \\
\href{mailto:jukka.kohonen@helsinki.fi}{\tt jukka.kohonen@helsinki.fi}
\end{center}

\begin{abstract}
An additive 2-basis with range $n$ is {\em restricted} if its largest
element is $n/2$.  Among the restricted 2-bases of given length $k$,
the ones that have the greatest range are {\em extremal restricted}.
We describe an algorithm that finds the extremal restricted 2-bases of
a given length, and we list them for lengths up to $k=41$.
\end{abstract}

%%%%%%%%%%%%%%%%%%%%%%%%%%%%%%%%%%%%%%%%%%%%%%%%%%%%%%%%%%%%%%%%%%%%%%

\section{Introduction}
Let $n$ be a positive integer.  An {\bf additive 2-basis for~$n$}, or
more briefly a {\bf basis for~$n$}, is a set of integers $A_k =
\{0=a_0 < a_1 < \cdots < a_k\}$ such that every integer in $[0,n]$ is
the sum of two of its elements, not necessarily distinct.  The {\bf
  length} of the basis is $k$.  The largest possible~$n$ for a basis
$A_k$ is its {\bf range} and denoted $n_2(A_k)$.  The maximum range
among all bases of length~$k$ is $n_2(k)$, and a basis that attains this
maximum is {\bf extremal}.

A basis $A_k$ is {\bf admissible} if $n_2(A_k) \ge a_k$, {\bf
  restricted} if $n_2(A_k) \ge 2a_k$, and {\bf symmetric} if $a_i +
a_{k-i} = a_k$ for all $0\le i\le k$.  Since for any basis $n_2(A_k) \le
2a_k$, a restricted basis has in fact $n_2(A_k) = 2a_k$ exactly.

The maximum range among restricted bases is called the {\bf extremal
  restricted range} and denoted $n_2^*(k)$, and an {\bf extremal
  restricted basis} is one that attains this maximum.  For many values
of $k$, at least some of the extremal bases are restricted, so that
$n_2^*(k)=n_2(k)$.  This is not always true: a counterexample is $k=10$,
where $n_2^*(10)=44$, but $n_2(10)=46$ (see Table~\ref{table:k10}).

Similarly, the maximum range among symmetric bases can be called the
{\bf extremal symmetric range}, and a basis that attains this maximum
can be called {\bf extremal symmetric basis}.  Extremal symmetric
bases are known up to $k=30$ due to Mossige \cite{mossige1981}.

\begin{table}[bt]
\begin{center}
\begin{tabular}{ll|l}
range & notes & basis \\
\hline
44 & R,A & 0 1 2 3 7 11 15 17 20 21 22 $\dagger$ \\
44 & R,S & 0 1 2 3 7 11 15 19 20 21 22 \\
44 & R,S & 0 1 2 5 7 11 15 17 20 21 22 \\
44 & R,A & 0 1 2 5 7 11 15 19 20 21 22 $\dagger$ \\
44 & R,S & 0 1 2 5 8 11 14 17 20 21 22 \\
44 & R,A & 0 1 3 4 6 11 13 18 19 21 22 $\ddagger$ \\
44 & R,S & 0 1 3 4 9 11 13 18 19 21 22 \\
44 & R,A & 0 1 3 4 9 11 16 18 19 21 22 $\ddagger$ \\
\hline
46 & NR,A & 0 1 2 3 7 11 15 19 21 22 24 \\
46 & NR,A & 0 1 2 5 7 11 15 19 21 22 24 \\
\hline
\end{tabular}
\caption{With length $k=10$, there are eight extremal restricted bases
  and two extremal nonrestricted bases, listed by Wagstaff
  \cite{wagstaff1979}.  The two bases marked with $\dagger$ are mirror
  images of each other; similarly the two bases marked with
  $\ddagger$.  Notes: R = restricted, NR = nonrestricted; S =
  symmetric, A = asymmetric.}
\label{table:k10}
\end{center}
\end{table}

All extremal bases of lengths $k\le 24$ are currently known
\cite{kohonen2014}.  Interestingly, all of them are either {\em both}
symmetric and restricted, or {\em neither}.  Three questions arise
naturally:
\begin{enumerate}
\item If an extremal basis is symmetric, is it necessarily restricted?
\item If an extremal basis is restricted, is it necessarily symmetric?
\item Does every $k \ge 15$ have an extremal basis that is symmetric?
\end{enumerate}

The first question is answered affirmatively by a theorem of Rohrbach
\cite{rohrbach1937}.  The second question was posed by Riddell and
Chan \cite[p.~631]{riddell1978}, but to our knowledge has not been
answered in general; with $k\le 24$ the answer is yes.

The third question has appeared in a stronger form: it was suggested
that all extremal bases with $k \ge 15$ might be symmetric
\cite{challis1993}.  This was later disproven by Challis and Robinson,
since $k=21$ has three extremal bases: one symmetric, two asymmetric
\cite{challis2010}.  The question remains whether every $k \ge 15$ has
at least one extremal basis that is symmetric.

In this work we describe an efficient algorithm for finding all
extremal restricted bases of a given length $k$.  The algorithm is
based on the idea that a restricted basis can be constructed by
concatenating two shorter admissible bases, one of them as a mirror
image.  With this method we have computed all extremal restricted
bases of lengths $k \le 41$.

Note that we have included $a_0=0$ in a basis, similarly to Wagstaff
\cite{wagstaff1979}.  If $0$ is excluded, the equivalent condition is
that every integer in $[0,n]$ is the sum of {\em at most} two elements
of the basis \cite[p.~3.1]{selmer1986}.  Excluding the zero is perhaps
more usual in current literature, but including it is more convenient
for our purposes.  The zero element is not counted in the length of
a basis.

\section{Related work}
Our search algorithm builds on a combination of existing ideas.
Rohrbach discusses symmetric bases, and the proof of his Satz 1 is
based on the observation that if a basis is mirrored from $a_k$, then
its pairwise sums are mirrored from $2a_k$ \cite{rohrbach1937}.  We
shall exploit a generalization of this for asymmetric restricted
bases.

Riddell and Chan discuss the connection between symmetric and
restricted bases \cite{riddell1978}.  Mossige notes that symmetric
bases $A_k$ can be efficiently searched by scanning through admissible
bases of length $A_{\lceil k/2 \rceil}$ \cite{mossige1981}.  For
symmetric bases this is sufficient; the second half of a symmetric
basis is a mirror image of the first half, and then Rohrbach's theorem
ensures that the constructed set $A_k$ is a basis for $2a_k$.  For
asymmetric restricted bases, a similar search can be conducted
separately for the two halves of the basis (prefix and suffix).
However, since Rohrbach's theorem does not apply to asymmetric bases,
the construction does not automatically yield a basis for $2a_k$.
This must be checked separately.

The final ingredient is the ``gaps test'' by Challis
\cite{challis1993}.  Based on a simple combinatorial argument, it
prunes the search tree of admissible bases, if they are required to
have a range of at least a given target value $T$.  In
section~\ref{section:prefix} we shall prove lower bounds for the
ranges of the prefix and the mirrored suffix.  With these lower bounds
the gaps test prunes the search tree very efficiently.

\section{Definitions and initial results}
If $A$ and $B$ are sets of integers, we define
\begin{equation*}
  A+B := \{a+b : a \in A, b \in B\},
\end{equation*}
and if $b$ is an integer, we define the {\bf mirror image} of $A$ with respect to $b$ as
\begin{equation*}
  b-A := \{b-a : a \in A\}.
\end{equation*}
The set of integers {\bf generated by} $A$ is
\begin{equation*}
  2A := A+A = \{a+a' : a,a' \in A\}.
\end{equation*}
It is straightforward to verify that
\begin{equation*}
  2(b-A) = 2b - 2A.
\end{equation*}
By $[c,d]$ we denote the consecutive integers $\{c, c+1, \ldots, d\}$.
Now the condition that $A$ is a basis for $n$ is succintly stated as
follows:
\begin{equation*}
  2A \supseteq [0,n].
\end{equation*}
If $A_k = \{a_0 < \cdots < a_k\}$ is a basis and $i<k$, then $A_i =
\{a_0, \ldots, a_i\}$ is a {\bf partial basis}.  We state without
proof three easy observations (see \cite{challis1993} and
\cite{selmer1986}):

\begin{lemma}
  If a basis is restricted, then it is admissible.
  \label{lemma:restadm}
\end{lemma}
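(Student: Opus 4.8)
The plan is to derive admissibility directly from the definitions by chaining a single inequality. Recall that $A_k$ being restricted means $n_2(A_k) \ge 2a_k$, while $A_k$ being admissible means $n_2(A_k) \ge a_k$. So the entire content of the lemma reduces to the elementary fact that $2a_k \ge a_k$, combined with transitivity of $\ge$.

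First I would observe that, because a basis has the form $\{0 = a_0 < a_1 < \cdots < a_k\}$, its largest element satisfies $a_k \ge 0$, whence $2a_k \ge a_k$. Then, assuming $A_k$ is restricted, I would combine this with the defining inequality to obtain
\begin{equation*}
  n_2(A_k) \ge 2a_k \ge a_k,
\end{equation*}
which is precisely the admissibility condition $n_2(A_k) \ge a_k$. The only step that could conceivably require attention is the claim $a_k \ge 0$, but this is immediate from the ordering of the basis elements (indeed $a_k > 0$ for $k \ge 1$), so there is no substantive obstacle: the lemma is a one-line consequence of the definitions. This is why the paper can reasonably list it among the ``easy observations'' stated without proof.
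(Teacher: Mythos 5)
Your proof is correct and is exactly the one-line consequence of the definitions that the paper has in mind when it states this lemma without proof among the ``easy observations'': since $n_2(A_k) \ge 2a_k$ and $a_k \ge 0$, transitivity gives $n_2(A_k) \ge a_k$. Nothing further is needed.
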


\begin{lemma}
  If a basis is extremal, then it is admissible.
  \label{lemma:extadm}
\end{lemma}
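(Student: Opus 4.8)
The plan is to prove the contrapositive: I will show that if a basis $A_k = \{0 = a_0 < a_1 < \cdots < a_k\}$ is not admissible, then it is not extremal. So suppose $A_k$ is not admissible, and set $n := n_2(A_k)$; by definition this means $a_k > n$.

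The key observation I would use is that any element exceeding $n$ is useless for covering $[0,n]$: if $a_i > n$, then every pairwise sum $a_i + a_{i'} \ge a_i > n$ falls outside $[0,n]$. Consequently, if I let $j$ be the largest index with $a_j \le n$ (such $j$ exists since $a_0 = 0 \le n$, and $j < k$ because $a_k > n$), then the partial basis $A_j$ already satisfies $2A_j \supseteq [0,n]$. In other words, the $k - j \ge 1$ top elements contribute nothing to the range.

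Next I would construct a competing basis of the same length $k$ but strictly larger range, by discarding those useless top elements and spending one freed slot on $n+1$. The set $A_j \cup \{n+1\}$ satisfies $2(A_j \cup \{n+1\}) \supseteq [0,n+1]$, since $[0,n]$ is covered by $2A_j$ while $n+1 = (n+1) + a_0$ is covered using the new element; note $n+1 > a_j$ is immediate from $a_j \le n$, so $n+1$ is genuinely a new largest-so-far element. The remaining $k - j - 1 \ge 0$ slots I would fill with distinct integers larger than $n+1$; since each exceeds the range it leaves the range unchanged. The resulting set $B$ then has exactly $k+1$ elements, hence length $k$, and range $n_2(B) \ge n+1$. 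Therefore $n_2(k) \ge n_2(B) \ge n+1 > n = n_2(A_k)$, so $A_k$ is not extremal.

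The only care needed is bookkeeping: confirming $j < k$ so that at least one slot is freed, verifying the element count of $B$ is $(j+1) + 1 + (k-j-1) = k+1$, and observing that the padding integers can always be chosen distinct and sufficiently large. The main (and modest) obstacle is that possibly several elements, not merely $a_k$, lie above the range $n$; I expect introducing the index $j$ and treating the whole block of over-range elements at once to be the cleanest way to sidestep this, avoiding any iterative removal argument.
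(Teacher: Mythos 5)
Your proof is correct. Note that the paper does not actually prove Lemma~\ref{lemma:extadm}: it is one of three ``easy observations'' stated without proof, with references to Challis and Selmer, so there is no in-paper argument to compare against. Your contrapositive argument is the standard one and is complete: you correctly observe that any element exceeding $n = n_2(A_k)$ contributes no sum in $[0,n]$, so the truncated set $A_j$ already covers $[0,n]$; you then rebuild a length-$k$ basis by adjoining $n+1$ (which covers $n+1$ via $n+1 = (n+1)+a_0$) and padding with distinct large integers, giving range at least $n+1 > n_2(A_k)$. The one genuinely delicate point --- that possibly several top elements, not just $a_k$, lie above the range --- is exactly the one you isolate and handle with the index $j$, and the element count $(j+1)+1+(k-j-1) = k+1$ checks out. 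The argument also works in the degenerate case $n_2(A_k)=0$, since $a_0 = 0$ guarantees $j$ exists.
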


\begin{lemma}
  If a basis $A_k$ is admissible, then for all $i<k$ the partial basis
  $A_i$ is admissible, and $a_{i+1} \le n_2(A_i)+1$.
  \label{lemma:partial}
\end{lemma}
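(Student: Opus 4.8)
The plan is to isolate a single combinatorial observation and then apply it twice. The observation is this: if $m$ is an integer with $m < a_{i+1}$ and $m \in 2A_k$, say $m = a_p + a_q$ with $a_p \le a_q$, then $a_q \le m < a_{i+1}$ forces $q \le i$, and hence $p \le q \le i$ as well, so in fact $m \in 2A_i$. Informally, a sum too small to reach $a_{i+1}$ cannot use any element beyond $a_i$, so it is already generated by the partial basis. Both conclusions of the lemma will follow from this observation together with the definition of range and the admissibility of $A_k$, which gives $[0,a_k] \subseteq 2A_k$.

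For the first claim I would take an arbitrary $m \in [0,a_i]$. Since $a_i < a_{i+1} \le a_k$ and $A_k$ is admissible, $m \in [0,a_k] \subseteq 2A_k$; the observation (applicable because $m \le a_i < a_{i+1}$) then places $m$ in $2A_i$. Hence $[0,a_i] \subseteq 2A_i$, i.e., $n_2(A_i) \ge a_i$, which is exactly the admissibility of $A_i$.

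For the bound $a_{i+1} \le n_2(A_i)+1$ I would argue by contradiction, supposing $a_{i+1} \ge n_2(A_i)+2$ and setting $m := n_2(A_i)+1$. By the definition of range, $m \notin 2A_i$. On the other hand $m \le a_{i+1}-1 < a_{i+1} \le a_k$, so admissibility of $A_k$ gives $m \in 2A_k$, and then the observation (again using $m < a_{i+1}$) forces $m \in 2A_i$ --- a contradiction. Therefore $a_{i+1} \le n_2(A_i)+1$.

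I do not expect a serious obstacle here; the entire content sits in the observation, and what remains is careful bookkeeping. The one point needing attention is the boundary handling: I must confirm that the relevant values of $m$ satisfy $m \le a_k$ so that the admissibility of $A_k$ genuinely applies, and I must use the precise meaning of $n_2(A_i)$ --- namely that $[0,n_2(A_i)] \subseteq 2A_i$ while $n_2(A_i)+1 \notin 2A_i$ --- rather than any looser notion of ``range''. Once these are pinned down, both parts close at once.
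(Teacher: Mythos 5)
Your argument is correct: the key observation that any sum $m=a_p+a_q<a_{i+1}$ with $a_p\le a_q$ forces $a_q\le m<a_{i+1}$ and hence $p\le q\le i$ is exactly what is needed, and both the admissibility of $A_i$ and the bound $a_{i+1}\le n_2(A_i)+1$ follow cleanly from it as you describe. The paper states this lemma without proof (deferring to Challis and Selmer), so there is no in-paper argument to compare against, but your proof is the standard one and the boundary checks ($m\le a_k$, and $n_2(A_i)+1\notin 2A_i$) are handled correctly.
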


The first question posed in the introduction is now answered by the
following theorem, essentially the same as Rohrbach's Satz 1
\cite[p.~4]{rohrbach1937}.

\begin{theorem}
  If $A_k$ is an extremal basis and it is symmetric, then it is
  restricted.
  \label{theorem:symmrest}
\end{theorem}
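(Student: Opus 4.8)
My plan is to exploit the symmetry of $A_k$ to show that the generated set $2A_k$ is invariant under the reflection $x \mapsto 2a_k - x$, and then to combine this reflection symmetry with the admissibility bound on the range so as to cover the whole interval $[0,2a_k]$. The key tool I will lean on is the identity $2(b-A) = 2b - 2A$ established above, applied with $b = a_k$.

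First I would record that, since $A_k$ is extremal, Lemma~\ref{lemma:extadm} makes it admissible, so $n := n_2(A_k) \ge a_k$, and by definition of the range $[0,n] \subseteq 2A_k$. Next I would translate symmetry into set form: the condition $a_i + a_{k-i} = a_k$ for all $i$ says precisely that $A_k = a_k - A_k$. Applying the mirror identity then gives
\[
  2A_k = 2(a_k - A_k) = 2a_k - 2A_k,
\]
so $2A_k$ is carried to itself by the reflection $x \mapsto 2a_k - x$.

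The decisive step is to reflect the interval already known to lie in $2A_k$. Applying the reflection to $[0,n] \subseteq 2A_k$ produces $[2a_k - n,\, 2a_k] \subseteq 2A_k$. Since $n \ge a_k$, the left endpoint satisfies $2a_k - n \le n$, so the two intervals $[0,n]$ and $[2a_k - n,\, 2a_k]$ overlap or abut, and their union is the contiguous interval $[0,2a_k]$. This forces $n_2(A_k) \ge 2a_k$, and since $n_2(A_k) \le 2a_k$ always holds, I would conclude $n_2(A_k) = 2a_k$, i.e.\ that $A_k$ is restricted.

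The argument is short, and the only point requiring care is the final overlap: the reflected interval must reach back far enough to meet the original, which is exactly what the inequality $n \ge a_k$ guarantees. That inequality is where extremality enters, and only through admissibility via Lemma~\ref{lemma:extadm}. I therefore expect the one subtle observation to be that symmetry by itself is not enough (a symmetric basis need not be admissible, hence need not be restricted): both hypotheses are genuinely used, and they feed into the single overlap inequality.
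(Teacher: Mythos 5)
Your proof is correct and follows essentially the same route as the paper: admissibility gives $[0,a_k]\subseteq 2A_k$, symmetry gives $2A_k = 2a_k - 2A_k$ via the mirror identity, and reflecting the known interval covers $[a_k,2a_k]$. The only cosmetic difference is that you work with $[0,n]$ for $n=n_2(A_k)\ge a_k$ and check the overlap explicitly, where the paper simply reflects $[0,a_k]$ onto $[a_k,2a_k]$.
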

\begin{proof}
  Let $a_k = \max\{A_k\}$.  By Lemma~\ref{lemma:extadm}, $A_k$ is
  admissible; thus $2A_k \supseteq [0,a_k]$.  By symmetry $A_k =
  a_k-A_k$, thus
  \begin{equation*}
    2A_k = 2(a_k-A_k) = 2a_k - 2A_k \supseteq [a_k, 2a_k].
  \end{equation*}
  Combining the above observations we have $2A_k \supseteq [0,2a_k]$,
  thus $A_k$ is restricted.
\end{proof}

Note that if $A_k$ is a restricted basis with range $n$, then its
largest element is exactly $a_k=n/2$.  Exploiting the idea of
mirroring from the largest element we obtain the following theorem.

\begin{theorem}
  If $A_k$ is a restricted basis with range $n$, then $a_k-A_k$ is
  also a restricted basis for $n$.
  \label{theorem:mirror}
\end{theorem}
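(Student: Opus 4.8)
The plan is to mirror the argument just used for Theorem~\ref{theorem:symmrest}, exploiting the same identity $2(b-A) = 2b - 2A$ recorded above. First I would unpack exactly what the hypothesis provides. Since $A_k$ is restricted we have $n_2(A_k) = 2a_k$, and since its range is $n$ this forces $n = 2a_k$; being a basis for $n$ then means precisely that $2A_k \supseteq [0,n] = [0, 2a_k]$. This containment is the only input the rest of the argument needs.

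Next I would set $B := a_k - A_k$ and verify that it is a legitimate basis-shaped set before doing anything with it. Its elements are the integers $a_k - a_i$, which are distinct because the $a_i$ are, which include $a_k - a_k = 0$, and which have maximum $a_k - a_0 = a_k$. In particular $\max B = a_k$. The point of recording this is that, once we know $B$ is a basis for $2a_k$, restrictedness will be automatic: it only requires $n_2(B) \ge 2\max B = 2a_k$, and a basis for $2a_k$ certainly satisfies $n_2(B) \ge 2a_k$.

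The core computation is the generating set of $B$. Applying the mirroring identity gives $2B = 2(a_k - A_k) = 2a_k - 2A_k$. Feeding in $2A_k \supseteq [0, 2a_k]$ and reflecting the containment about $a_k$ yields $2B = 2a_k - 2A_k \supseteq 2a_k - [0, 2a_k] = [0, 2a_k]$, the last equality being the elementary fact that reflecting the interval $[0, 2a_k]$ about its own midpoint $a_k$ returns the same interval. Hence $B$ is a basis for $2a_k = n$, and since $\max B = a_k$ it is in fact a restricted basis for $n$, as claimed.

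I do not anticipate a genuine obstacle: the statement is essentially Rohrbach's mirroring observation, now applied to a possibly asymmetric restricted basis rather than a symmetric one, so symmetry is never invoked. The only two points that warrant a moment of care are confirming that the mirror image is again a valid basis (it contains $0$ and retains the largest element $a_k$) and that $[0, 2a_k]$ is fixed under reflection about $a_k$; both are immediate, which is why the whole proof reduces to the single chain of inclusions above.
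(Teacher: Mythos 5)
Your proposal is correct and follows essentially the same route as the paper's proof: apply the identity $2(a_k - A_k) = 2a_k - 2A_k$, observe that $[0,n]=[0,2a_k]$ is fixed under reflection about $a_k$, and note that the mirror image still has largest element $a_k = n/2$. The extra checks you record (distinctness of elements, presence of $0$) are fine but not needed beyond what the paper states.
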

\begin{proof}
  Since $A_k$ is a basis for $n$, it follows that $2A_k \supseteq
  [0,n]$.  Now
  \begin{equation*}
    2(a_k-A_k) = 2a_k - 2A_k = n - 2A_k \supseteq [0,n],
  \end{equation*}
  thus $a_k-A_k$ is a basis for $n$.  Its largest element is $a_k-0 =
  a_k = n/2$, thus it is restricted.
\end{proof}

Theorem~\ref{theorem:mirror} implies that asymmetric restricted bases
always form pairs that are mirror images of each other.  Two such
pairs are seen in Table~\ref{table:k10}.

\section{Prefix and suffix of a restricted basis}
\label{section:prefix}

Let $A_k$ be a restricted basis with range $n$ and length $k \ge 3$.
Then by Theorem~\ref{theorem:mirror} the mirror image $B_k = a_k-A_k$
is also a restricted basis with range $n$.  Choose now an arbitrary
{\bf pivot index} $i$ such that $0<i<k-1$.  Split $A_k$ into a {\bf
  prefix} $A_i = \{a_0 < \cdots < a_i\}$ and a {\bf suffix} $R =
\{a_{i+1} < \cdots < a_k\}$.  The prefix is a partial basis of $A_k$.
The suffix can be mirrored from $a_k$ to obtain another basis
\begin{equation*}
B_j = a_k - R = \{b_0 < \cdots < b_j\},
\end{equation*}
where $j=k-1-i$, and $b_h = a_k - a_{k-h}$ for all $0\le h \le j$.
Now $B_j$ is a partial basis of $B_k$.

By Lemma~\ref{lemma:restadm} both $A_k$ and $B_k$ are admissible, and
then by Lemma~\ref{lemma:partial}
\begin{align*}
  n_2(A_i) &\ge a_{i+1}-1,\\
  n_2(B_j) &\ge b_{j+1}-1.
\end{align*}
We have now lower bounds for the ranges $n_2(A_i)$ and $n_2(B_j)$, but the
bounds depend on $a_{i+1}$ and $b_{j+1}$.  However, these values can
further be bounded from below:
\begin{align*}
  a_{i+1} &= a_k - b_j \ge a_k - (n_2(B_{j-1})+1) \ge a_k - n_2(j-1) - 1, \\
  b_{j+1} &= a_k - a_i \ge a_k - (n_2(A_{i-1})+1) \ge a_k - n_2(i-1) - 1,
\end{align*}
where $n_2(i-1)$ and $n_2(j-1)$ are the maximum ranges of bases of lengths
$i-1$ and $j-1$, respectively.  These maximum ranges are currently
known up to length $24$.

Combining these bounds we can state a necessary condition for
$A_k$ being a restricted basis with range $n$.

\begin{theorem}
  If $A_k$ is a restricted basis with range $n$, and $i$ is
  an index such that $0<i<k-1$, and $i+j=k-1$, then:
  \begin{enumerate}
  \item The prefix $A_i$ is an admissible basis such that
    $n_2(A_i) \ge a_k - n_2(j-1) - 2$.
  \item The mirrored suffix $B_j = a_k-\{a_{i+1},\ldots,a_k\}$ is an
    admissible basis such that $n_2(B_j) \ge a_k - n_2(i-1) - 2$.
  \end{enumerate}
  \label{theorem:main}
\end{theorem}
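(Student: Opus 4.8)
The plan is to prove the two claims by a symmetric argument: the second statement for the mirrored suffix $B_j$ follows from the first statement applied to the mirror image $B_k = a_k - A_k$, which is itself a restricted basis with range $n$ by Theorem~\ref{theorem:mirror}. So I would concentrate on proving the first claim in full and then invoke this symmetry to obtain the second, swapping the roles of $A$ and $B$ and of $i$ and $j$.

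For the first claim I would proceed in three steps. First, establish admissibility: since $A_k$ is restricted it is admissible by Lemma~\ref{lemma:restadm}, and since $A_i$ is a partial basis of $A_k$, Lemma~\ref{lemma:partial} shows that $A_i$ is admissible and gives the bound $n_2(A_i) \ge a_{i+1} - 1$. Second, translate $a_{i+1}$ into mirrored coordinates via the relation $a_{i+1} = a_k - b_j$, where $b_j = a_k - a_{i+1}$ is the largest element of the partial basis $B_j = a_k - \{a_{i+1}, \ldots, a_k\}$. Third, bound $b_j$ from above: $B_j$ is a partial basis of the admissible basis $B_k$, so Lemma~\ref{lemma:partial} applied to $B_j$ gives $b_j \le n_2(B_{j-1}) + 1$, and since $B_{j-1}$ has length $j-1$ we may replace its range by the global maximum, $n_2(B_{j-1}) \le n_2(j-1)$. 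Chaining these steps yields $n_2(A_i) \ge a_{i+1} - 1 = a_k - b_j - 1 \ge a_k - n_2(j-1) - 2$, as required.

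Each individual step is a direct application of a result already available, so no genuinely new estimate is needed; the proof is essentially an assembly of the inequalities displayed immediately before the statement. The main obstacle I anticipate is purely bookkeeping: keeping the mirroring indices straight. In particular one must verify that the largest element of $B_j$ is $b_j = a_k - a_{i+1}$ rather than $a_k - a_i$, that $B_{j-1}$ indeed has length $j-1$ so that $n_2(B_{j-1}) \le n_2(j-1)$ is the correct global bound, and---for the symmetric second claim---that the relevant element is $b_{j+1} = a_k - a_i$, which relies on the identity $i + j = k-1$. These off-by-one checks are where an error would most easily creep in, so I would carry the index relation $b_h = a_k - a_{k-h}$ explicitly throughout and confirm each substitution against it.
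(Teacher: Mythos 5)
Your proposal is correct and matches the paper's own argument, which is simply the chain of inequalities displayed before the theorem statement: Lemma~\ref{lemma:partial} applied to the admissible bases $A_k$ and $B_k=a_k-A_k$ (admissible via Theorem~\ref{theorem:mirror} and Lemma~\ref{lemma:restadm}), combined with $b_j=a_k-a_{i+1}$, $b_{j+1}=a_k-a_i$, and the global bounds $n_2(B_{j-1})\le n_2(j-1)$, $n_2(A_{i-1})\le n_2(i-1)$. The only cosmetic difference is that you derive the second claim by applying the first to the mirror image, whereas the paper writes both chains out in parallel; your index bookkeeping ($b_h=a_k-a_{k-h}$, $j=k-1-i$) is consistent with the paper's.
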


\begin{example}
Let $k=10$ and $n=44$, and choose $i=5$ (thus $j=4$).  If $A_{10}$ is
a restricted basis for $44$, then $a_{10}=22$.

Since $n_2(3)=8$, $b_4$ cannot be greater than $8+1=9$; in other words,
$a_6 = 22-b_4$ cannot be smaller than $22-9 = 13$; thus $n_2(A_5) \ge
12$.

Similarly, since $n_2(4)=12$, $a_5$ cannot be greater than $12+1=13$; in
other words, $b_5 = 22-a_5$ cannot be smaller than $22-13=9$; thus
$n_2(B_4) \ge 8$.

The lower bounds are the ones given by Theorem~\ref{theorem:main}.
Consider now a restricted basis $A_{10}$ and its mirror image
$B_{10}$, shown in right-to-left order:
\begin{equation*}
  \begin{array}{lrrrrrrrrrrrl}
    & \multicolumn{6}{c}{A_5}   & \multicolumn{5}{c}{R} \\
    & \multispan6\mathstrut\downbracefill & \multispan5\mathstrut\downbracefill \\
    A_{10} = & 0&  1& 3& 4& 6&11&13&18&19&21&22 \\
            &22&21&19&18&16&11& 9& 6& 3& 1& 0  & = B_{10}\\
    \multispan7               & \multispan5\mathstrut\upbracefill \\
    \multispan7               & \multicolumn{5}{c}{B_4} \\
  \end{array}
\end{equation*}
The prefix $A_5$ has range $12$, and the mirrored suffix $B_4$ has
range $10$.  Both ranges are within the bounds required by
Theorem~\ref{theorem:main}.
\end{example}

The second part of Theorem~\ref{theorem:main} also provides an upper
bound for the range of a restricted basis:
\begin{align*}
  n_2(A_k) = 2a_k &\le 2n_2(B_j) + 2n_2(i-1) + 4 \\
                &\le 2n_2(j) + 2n_2(i-1) + 4.
\end{align*}
Choosing $i = \lfloor k/2 \rfloor$ this yields the following bounds,
for even and odd values of~$k$.
\begin{corollary}
If $r>1$ is an integer, then
\begin{align*}
  n_2^*(2r)   &\le 4n_2(r-1) + 4,   \\
  n_2^*(2r+1) &\le 2n_2(r-1) + 2n_2(r) + 4.
\end{align*}
\label{cor:upper}
\end{corollary}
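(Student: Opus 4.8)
The plan is to specialize the upper bound already derived from part~2 of Theorem~\ref{theorem:main} to the balanced pivot $i=\lfloor k/2\rfloor$. For an arbitrary restricted basis $A_k$ with pivot index $i$ and $j=k-1-i$, part~2 of Theorem~\ref{theorem:main} rearranges to $a_k \le n_2(B_j)+n_2(i-1)+2$, and since $B_j$ is an admissible basis of length $j$ its range satisfies $n_2(B_j)\le n_2(j)$ by definition of the maximum range; multiplying by two gives
\[
  n_2(A_k)=2a_k \le 2n_2(B_j)+2n_2(i-1)+4 \le 2n_2(j)+2n_2(i-1)+4 .
\]
The right-hand side depends only on $k$ and the chosen $i$ (through the constants $n_2(j)$ and $n_2(i-1)$), and not on the particular basis. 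Hence, taking the maximum over all restricted bases of length $k$, I would conclude $n_2^*(k)\le 2n_2(j)+2n_2(i-1)+4$.

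Next I would instantiate $i=\lfloor k/2\rfloor$ separately in the two parities. For $k=2r$ this gives $i=r$ and $j=k-1-i=r-1$, so $n_2(i-1)=n_2(r-1)$ and $n_2(j)=n_2(r-1)$, and the bound collapses to $4n_2(r-1)+4$. For $k=2r+1$ it gives $i=r$ and $j=r$, so $n_2(i-1)=n_2(r-1)$ and $n_2(j)=n_2(r)$, yielding $2n_2(r-1)+2n_2(r)+4$. These are exactly the two claimed inequalities, so the computational content is a single substitution into the displayed bound.

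The only point requiring care is checking that the chosen pivot satisfies the hypothesis $0<i<k-1$ of Theorem~\ref{theorem:main}, and this is precisely where the assumption $r>1$ enters. For $k=2r$ we have $i=r\ge 2>0$, while $i<k-1$ reads $r<2r-1$, i.e.\ $r>1$; for $k=2r+1$ we have $i=r\ge 2>0$, while $i<k-1$ reads $r<2r$, which holds for every $r\ge 1$. I would also remark that $r>1$ keeps the arguments $i-1$ and $j-1$ nonnegative, so that the quantities $n_2(i-1)$ and $n_2(j-1)$ invoked by Theorem~\ref{theorem:main} are well defined. No deep step arises; I expect the verification of $0<i<k-1$ to be the only genuine obstacle, and it is dispatched by the index bookkeeping just described.
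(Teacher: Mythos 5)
Your proposal is correct and follows essentially the same route as the paper: rearranging part~2 of Theorem~\ref{theorem:main} to bound $2a_k$ by $2n_2(j)+2n_2(i-1)+4$ and then substituting $i=\lfloor k/2\rfloor$ for the two parities. Your explicit check that $r>1$ guarantees $0<i<k-1$ is a small addition the paper leaves implicit, but it is the same argument.
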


\section{Search algorithm}
Suppose that $k$ and $n$ are given, and the task is to enumerate every
restricted basis of length $k$ and range $n$ (if any such bases
exist).  Choose a pivot index $i$, for example $i = \lfloor k/2
\rfloor$.

A straightforward method would be to enumerate all admissible prefix
bases $A_i$, all admissible mirrored suffix bases $B_j$, and for each
pair $(A_i, B_j)$ check whether $A_i \cup (n/2 - B_j)$ happens to be a
basis for $n$, that is, whether it generates all integers in $[0,n]$.
For large $k$ this is not feasible, as the number of admissible bases
of length $i$ increases rapidly (see \seqnum{A167809} in \cite{oeis}).

However, Theorem~\ref{theorem:main} gives definite lower bounds for
the ranges of the prefix $A_i$ and the mirrored suffix $B_j$.  Thus
only a tiny fraction of all admissible prefixes and mirrored suffixes
need to be considered, as seen in the following example.

\begin{example}
Let $k=25$ and $n=228$.  We want to know whether there are any
restricted bases with these values, and to list them if there are.
Choose $i=12$ (thus $j=12$).  The last element of a restricted basis
must be $a_k = n/2 = 114$.  There are $15\;752\;080$ admissible bases
of length 12, but we only need to consider the prefixes $A_i$ such
that $n_2(A_i) \ge 114-n_2(11)-2 = 58$; there are only $187$ such
prefixes.
\end{example}

Admissible bases with a given length and a given minimum range can be
enumerated with the algorithm (``K-program'') described by Challis
\cite{challis1993}.  Combining these ingredients we obtain
Algorithm~\ref{alg:mim}, which enumerates all restricted bases of
given length $k$ and range $n$.

\begin{algorithm}
\caption{List restricted bases of length $k$ and range $n$}
\begin{algorithmic}[1]
\REQUIRE $k \ge 3$
\STATE $i \leftarrow \lfloor k/2 \rfloor$ \COMMENT{Choose pivot index}
\STATE $j \leftarrow k-i-1$
\STATE $n_a \leftarrow n_2(i-1)$ \COMMENT{Lookup from \seqnum{A001212}}
\STATE $n_b \leftarrow n_2(j-1)$ \COMMENT{Lookup from \seqnum{A001212}}
\STATE ${\cal A} \leftarrow \{A_i : n_2(A_i) \ge n/2 - n_b - 2 \}$   \COMMENT{List prefixes with Challis algorithm \cite{challis1993}}
\STATE ${\cal B} \leftarrow \{B_j : n_2(B_j) \ge n/2 - n_a - 2 \}$  \COMMENT{List mirrored suffixes with Challis algorithm}
\FORALL{$A_i \in {\cal A}$}
  \FORALL{$B_j \in {\cal B}$}
     \STATE $R \leftarrow n/2 - B_j$ \COMMENT{Mirror from $n/2$}
     \STATE $A_k \leftarrow A_i \cup R$ \COMMENT{Concatenate}
     \IF[Generate pairwise sums and check range]{$2A_k \supseteq [0,n]$}
       \PRINT{$A_k$}
     \ENDIF
  \ENDFOR
\ENDFOR
\end{algorithmic}
\label{alg:mim}
\end{algorithm}

If $n_2^*(k)$ is not known, Algorithm~\ref{alg:mim} can be run with
different values of $n$, starting from the upper bound for $n_2^*(k)$
provided by Corollary~\ref{cor:upper}.  If no solutions are found, $n$
is then decreased in steps of $2$, until for some $n$ there are
solutions.  Only even values of $n$ need to be considered, since the
range of a restricted basis is always even.

\begin{example}
  Let $k=25$.  By Corollary~\ref{cor:upper}, $n_2^*(25) \le 240$.  For
  $n=240$ the search algorithm finds no solutions.  Then $n$ is
  reduced in steps of $2$, until for $n=228$ the algorithm returns one
  solution:
  \begin{align*}
    A_{25} = \{&0,1,3,4,6,10,13,15,21,29,37,45,53,\\
              &61,69,77,85,93,99,101,104,108,110,111,113,114 \}
  \end{align*}
  By construction, this is an extremal restricted basis,
  so now we know that $n_2^*(25) = 228$.
\end{example}

\section{Results}
Using the search algorithm described in the previous section, we
performed an exhaustive search for extremal restricted bases of
lengths $k=25,\ldots,41$.  The bases are listed in
Table~\ref{table:newrestricted}.  For ease of reference, previously
known extremal restricted bases of lengths $k=1,\ldots,24$ are listed
in Table~\ref{table:oldrestricted}.

\begin{sidewaystable}[p]
\begin{center}
\small
\setlength{\tabcolsep}{4pt}
\begin{tabular}{lll|rrrrrrrrrrrrrrrrrrrrrrrrrrrrrrrrrrr}
$k$ & $n_2^*(k)$ & & \multicolumn{5}{l}{basis} \\
\hline
  1 &   2 & S&   0&   1\\
\hline
  2 &   4 & S&   0&   1&   2\\
\hline
  3 &   8 & S&   0&   1&   3&   4\\
\hline
  4 &  12 & S&   0&   1&   3&   5&   6\\
\hline
  5 &  16 & S&   0&   1&   3&   5&   7&   8\\
\hline
  6 &  20 & S&   0&   1&   2&   5&   8&   9&  10\\
  6 &  20 & S&   0&   1&   3&   5&   7&   9&  10\\
\hline
  7 &  26 & S&   0&   1&   2&   5&   8&  11&  12&  13\\
  7 &  26 & S&   0&   1&   3&   4&   9&  10&  12&  13\\
\hline
  8 &  32 & S&   0&   1&   2&   5&   8&  11&  14&  15&  16\\
\hline
  9 &  40 & S&   0&   1&   3&   4&   9&  11&  16&  17&  19&  20\\
\hline
 10 &  44 &  &   \multicolumn{8}{l}{see Table~\ref{table:k10}} \\
\hline
 11 &  54 & S&   0&   1&   3&   4&   9&  11&  16&  18&  23&  24&  26&  27\\
 11 &  54 & S&   0&   1&   3&   5&   6&  13&  14&  21&  22&  24&  26&  27\\
\hline
 12 &  64 & S&   0&   1&   3&   4&   9&  11&  16&  21&  23&  28&  29&  31&  32\\
\hline
 13 &  72 & S&   0&   1&   3&   4&   9&  11&  16&  20&  25&  27&  32&  33&  35&  36\\
\hline
 14 &  80 & S&   0&   1&   3&   4&   5&   8& $\cdots$ &  +6 & $\cdots$&  32&  35&  36&  37&  39&  40\\
\hline
 15 &  92 & S&   0&   1&   3&   4&   5&   8& $\cdots$ &  +6 & $\cdots$&  38&  41&  42&  43&  45&  46\\
\hline
 16 & 104 & S&   0&   1&   3&   4&   5&   8& $\cdots$ &  +6 & $\cdots$&  44&  47&  48&  49&  51&  52\\
\hline
 17 & 116 & S&   0&   1&   3&   4&   5&   8& $\cdots$ &  +6 & $\cdots$&  50&  53&  54&  55&  57&  58\\
\hline
 18 & 128 & S&   0&   1&   3&   4&   5&   8& $\cdots$ &  +6 & $\cdots$&  56&  59&  60&  61&  63&  64\\
\hline
 19 & 140 & S&   0&   1&   3&   4&   5&   8& $\cdots$ &  +6 & $\cdots$&  62&  65&  66&  67&  69&  70\\
\hline
 20 & 152 & S&   0&   1&   3&   4&   5&   8& $\cdots$ &  +6 & $\cdots$&  68&  71&  72&  73&  75&  76\\
\hline
 21 & 164 & S&   0&   1&   3&   4&   5&   8& $\cdots$ &  +6 & $\cdots$&  74&  77&  78&  79&  81&  82\\
 21 & 164 & S&   0&   1&   3&   4&   6&  10&  13&  15&  21& $\cdots$ &  +8 & $\cdots$&  61&  67&  69&  72&  76&  78&  79&  81&  82\\
\hline
 22 & 180 & S&   0&   1&   3&   4&   6&  10&  13&  15&  21& $\cdots$ &  +8 & $\cdots$&  69&  75&  77&  80&  84&  86&  87&  89&  90\\
\hline
 23 & 196 & S&   0&   1&   3&   4&   6&  10&  13&  15&  21& $\cdots$ &  +8 & $\cdots$&  77&  83&  85&  88&  92&  94&  95&  97&  98\\
\hline
 24 & 212 & S&   0&   1&   3&   4&   6&  10&  13&  15&  21& $\cdots$ &  +8 & $\cdots$&  85&  91&  93&  96& 100& 102& 103& 105& 106\\
\end{tabular}
\caption{Extremal restricted bases of lengths $k=1,\ldots,24$.  S =
  symmetric, A = asymmetric.  $+c$ indicates several elements with a
  repeated difference of $c$.}
\label{table:oldrestricted}
\end{center}
\end{sidewaystable}

\begin{sidewaystable}[p]
\begin{center}
\small
\setlength{\tabcolsep}{2pt}
\begin{tabular}{lll|rrrrrrrrrrrrrrrrrrrrrrrrrrrrrrrrrrr}
$k$ & $n_2^*(k)$ & & \multicolumn{5}{l}{basis} \\
\hline
 25 & 228 & S&   0&   1&   3&   4&   6&  10&  13&  15&  21& $\cdots$ & +8 & $\cdots$&  93&  99& 101& 104& 108& 110& 111& 113& 114\\
\hline
 26 & 244 & S&   0&   1&   3&   4&   6&  10&  13&  15&  21& $\cdots$ & +8 & $\cdots$& 101& 107& 109& 112& 116& 118& 119& 121& 122\\
 26 & 244 & S&   0&   1&   3&   4&   5&   8&  11&  15&  16& $\cdots$ & +9 & $\cdots$& 106& 107& 111& 114& 117& 118& 119& 121& 122\\
\hline
 27 & 262 & S&   0&   1&   3&   4&   5&   8&  11&  15&  16& $\cdots$ & +9 & $\cdots$& 115& 116& 120& 123& 126& 127& 128& 130& 131\\
\hline
 28 & 280 & S&   0&   1&   3&   4&   5&   8&  11&  15&  16& $\cdots$ & +9 & $\cdots$& 124& 125& 129& 132& 135& 136& 137& 139& 140\\
\hline
 29 & 298 & S&   0&   1&   3&   4&   5&   8&  11&  15&  16& $\cdots$ & +9 & $\cdots$& 133& 134& 138& 141& 144& 145& 146& 148& 149\\
\hline
 30 & 316 & S&   0&   1&   3&   4&   5&   8&  11&  15&  16&  25&  34& $\cdots$ &  +9 & $\cdots$& 124& 133& 142& 143& 147& 150& 153& 154& 155& 157& 158\\
 30 & 316 & S&   0&   1&   2&   5&   6&   8&  13&  14&  17&  19&  29& $\cdots$ & +10 & $\cdots$& 129& 139& 141& 144& 145& 150& 152& 153& 156& 157& 158\\
 30 & 316 & A&   0&   1&   2&   5&   6&   8&  13&  14&  17&  19&  29& $\cdots$ & +10 & $\cdots$& 129& 133& 139& 141& 146& 150& 152& 154& 155& 157& 158\\
 30 & 316 & A&   0&   1&   3&   4&   6&   8&  12&  17&  19&  25&  29& $\cdots$ & +10 & $\cdots$& 129& 139& 141& 144& 145& 150& 152& 153& 156& 157& 158\\
 30 & 316 & S&   0&   1&   3&   4&   6&   8&  12&  17&  19&  25&  29& $\cdots$ & +10 & $\cdots$& 129& 133& 139& 141& 146& 150& 152& 154& 155& 157& 158\\
 30 & 316 & S&   0&   1&   3&   4&   7&   8&   9&  16&  17&  21&  24& $\cdots$ & +11 & $\cdots$& 134& 137& 141& 142& 149& 150& 151& 154& 155& 157& 158\\
\hline
 31 & 338 & S&   0&   1&   3&   4&   7&   8&   9&  16&  17&  21&  24& $\cdots$ & +11 & $\cdots$& 145& 148& 152& 153& 160& 161& 162& 165& 166& 168& 169\\
\hline
 32 & 360 & S&   0&   1&   3&   4&   7&   8&   9&  16&  17&  21&  24& $\cdots$ & +11 & $\cdots$& 156& 159& 163& 164& 171& 172& 173& 176& 177& 179& 180\\
\hline
 33 & 382 & S&   0&   1&   3&   4&   7&   8&   9&  16&  17&  21&  24& $\cdots$ & +11 & $\cdots$& 167& 170& 174& 175& 182& 183& 184& 187& 188& 190& 191\\
\hline
 34 & 404 & S&   0&   1&   3&   4&   7&   8&   9&  16&  17&  21&  24& $\cdots$ & +11 & $\cdots$& 178& 181& 185& 186& 193& 194& 195& 198& 199& 201& 202\\
\hline
 35 & 426 & S&   0&   1&   3&   4&   7&   8&   9&  16&  17&  21&  24& $\cdots$ & +11 & $\cdots$& 189& 192& 196& 197& 204& 205& 206& 209& 210& 212& 213\\
\hline
 36 & 448 & S&   0&   1&   3&   4&   7&   8&   9&  16&  17&  21&  24& $\cdots$ & +11 & $\cdots$& 200& 203& 207& 208& 215& 216& 217& 220& 221& 223& 224\\
\hline
 37 & 470 & S&   0&   1&   3&   4&   7&   8&   9&  16&  17&  21&  24& $\cdots$ & +11 & $\cdots$& 211& 214& 218& 219& 226& 227& 228& 231& 232& 234& 235\\
\hline
 38 & 492 & S&   0&   1&   3&   4&   7&   8&   9&  16&  17&  21&  24& $\cdots$ & +11 & $\cdots$& 222& 225& 229& 230& 237& 238& 239& 242& 243& 245& 246\\
\hline
 39 & 514 & S&   0&   1&   3&   4&   7&   8&   9&  16&  17&  21&  24& $\cdots$ & +11 & $\cdots$& 233& 236& 240& 241& 248& 249& 250& 253& 254& 256& 257\\
\hline
 40 & 536 & S&   0&   1&   3&   4&   7&   8&   9&  16&  17&  21&  24&  35&  46& $\cdots$ & +11 & $\cdots$& 222& 233& 244& 247& 251& 252& 259& 260& 261& 264& 265& 267& 268\\
 40 & 536 & S&   0&   1&   2&   5&   7&  10&  11&  19&  21&  22&  25&  29&  30& $\cdots$ & +13 & $\cdots$& 238& 239& 243& 246& 247& 249& 257& 258& 261& 263& 266& 267& 268\\
\hline
 41 & 562 & S&   0&   1&   2&   5&   7&  10&  11&  19&  21&  22&  25&  29&  30& $\cdots$ & +13 & $\cdots$& 251& 252& 256& 259& 260& 262& 270& 271& 274& 276& 279& 280& 281\\
\end{tabular}
\caption{Extremal restricted bases of lengths $k=25,\ldots,41$.  S =
  symmetric, A = asymmetric.  $+c$ indicates several elements with a
  repeated difference of $c$.}
\label{table:newrestricted}
\end{center}
\end{sidewaystable}

For lengths $25,\ldots,29$ the extremal restricted bases are the
extremal symmetric bases listed by Mossige \cite{mossige1981}.  For
lengths $31,\ldots,41$ they equal the bases given by Challis and
Robinson's construction \cite[p.~6]{challis2010}.  Note that while the
aforementioned construction gives a lower bound for the extremal
restricted range, exhaustive search gives the exact range.

With $k=30$, there are six extremal restricted bases with range $316$.
Four of them are symmetric and were listed by Mossige, but two are
asymmetric.  This is perhaps unexpected, and shows that at least one
of the questions 2 and 3 stated in the introduction must be answered
negatively.  It is currently not known whether $n_2(30)$ is $316$ or
greater.
\begin{itemize}
\item If $n_2(30)=316$, then we have here two extremal bases that are
  restricted, but asymmetric; this would answer question 2 negatively.
\item If $n_2(30)>316$, then there must be some (currently unknown)
  nonrestricted bases with range greater than $316$, but they cannot
  be symmetric (for if they were, they would be restricted by
  Theorem~\ref{theorem:symmrest}).  This would answer question 3
  negatively.
\end{itemize}

As an example of the time requirement, with $k=41$ and $n=562$, the
algorithm generates $5\;514$ prefixes of length $20$ and range at
least $n/2-n_2(19)-2=139$.  These were enumerated in $120$ CPU hours
on parallel 2.6~GHz Intel Xeon processors, with a C++ implementation
of the Challis algorithm.  Since $41$ is odd, we have $j=i$, and the
mirrored suffixes are the same as the prefixes.  The concatenation
phase of the algorithm (lines 7 to 15) took 1.8 seconds with a Matlab
implementation.

\section{Discussion}
Restricted bases are an interesting class of additive bases for two
reasons.  On one hand, searching for the extremal solutions among
restricted bases is enormously faster than searching among all
additive bases, as illustrated in the previous sections.  This
efficiency stems from Theorem~\ref{theorem:mirror}, which places a
very strong constraint on any extremal restricted basis: that its
mirror image must also be a restricted basis (possibly different).
Thus restricted additive bases can be seen as a generalization of
symmetric additive bases.

On the other hand, among lengths $k=1,\ldots,24$, in almost every case
at least one of the extremal bases is restricted (with the sole
exception of $k=10$).  The reason for this is not known, and it is not
known whether this regularity continues for $k>24$.  The case of
$k=30$, discussed in the previous section, suggests that there may be
surprises waiting to be found.

For simplicity, we have always taken $i = \lfloor k/2 \rfloor$ in our
search algorithm.  Further research is needed to find the optimal
pivot index $i$ that minimizes the search work.

While Theorem~\ref{theorem:mirror} as such does not apply to
nonrestricted bases, it would be interesting to know if it could be
generalized in such a way that applies to them.  Such a generalization
might provide an improved search method for extremal additive bases in
the nonrestricted case.

\bigskip
\hrule
\bigskip
\noindent 2000 {\it Mathematics Subject Classification}:
Primary 11B13.

\noindent \emph{Keywords: } additive basis, restricted basis.

\bigskip
\hrule
\bigskip

\noindent (Concerned with sequences \seqnum{A001212},
\seqnum{A006638}, and \seqnum{A167809}.)

\bigskip
\hrule
\bigskip

\end{document}